\DeclareMathOperator{\Span}{span}
\DeclareMathOperator{\rk}{rk}
\DeclareMathOperator{\Pic}{Pic}
\DeclareMathOperator{\Jac}{Jac}
\DeclareMathOperator{\codim}{codim}
\DeclareMathOperator{\Sec}{Sec}
\DeclareMathOperator{\td}{td}
\DeclareMathOperator{\ch}{ch}
\newtheorem{thm}{Theorem}[section]
\newtheorem{prop}[thm]{Proposition}
\newtheorem{defn}[thm]{Definition}
\newtheorem{lemma}[thm]{Lemma}
\title{The degree of the third secant variety of a smooth curve of genus $2$}
\author{Andrea Hofmann}
\address{Matematisk Institutt\\
Universitetet I Oslo\\
PO Box 1053\\
Blindern, NO-0316 Oslo, Norway}
\email{andrehof@math.uio.no}
\begin{document}

\begin{abstract}
We give a new method of computation of the degree of the third secant variety $\Sec_3(C)$ of a smooth curve $C\subseteq \mathbf P^{d-2}$ of genus $2$ and degree $d\geq 8$, using the presentation of $\Sec_3(C)$ as the union of all scrolls that are defined via a $g^1_3$ on $C$.
\end{abstract}

\maketitle

\section{Introduction}\label{intro}

\noindent Berzolari's formula from 1895 (cf. \cite{BallCoss}, Section $4$) computes the number of trisecant lines to a smooth curve of genus $g$ and degree $d$ in $\mathbf P^4$, in the case this number is finite. This number is equal to $\binom{d-2}{3}-g(d-4)$.

\noindent In this paper $C$ denotes a smooth and irreducible curve of genus $2$ and degree $d\geq 8$ embedded in $\mathbf P^{d-2}$.\\
The third secant variety of $C$, $\Sec_3(C)$, is defined as the closure of the union of all trisecant planes to $C$:
$$\Sec_3(C)=\overline{\bigcup_{D\in C_3}\Span(D)},$$

\noindent where $C_3:=(C\times C\times C)/S_3$ parametrizes effective divisors of degree $3$ on $C$, and $\Span(D)$ denotes the plane spanned by the three points in $D$.

\noindent The dimension of $\Sec_3(C)$ is equal to $\dim(C_3)+\dim(\Span(D))=5$, i.e. in order to find the degree of $\Sec_3(C)$ we have to intersect with five general hyperplanes.\\
Let now $V$ denote the intersection of five general hyperplanes in $\mathbf P^{d-2}$, i.e. $V$ is a general space of codimension $5$ in $\mathbf P^{d-2}$.
Since $\dim(\Sec_2(C))=3$ and $\codim(V)=5$, $V$ and $\Sec_2(C)$ do not intersect. This implies that $V$ cannot intersect any trisecant plane to $C$ in a line, since every trisecant plane to $C$ contains three lines in $\Sec_2(C)$, and so if $V$ intersects a trisecant plane to $C$ in a line $L$, then $L$ intersects at least one of those lines in $\Sec_2(C)$ in a point which obviously lies in $\Sec_2(C)$.\\
Projecting from $V$ to $\mathbf P^4$ gives us the equality of the degree of $\Sec_3(C)$ and the number of trisecant lines to a curve $C\subseteq \mathbf P^4$ of genus $2$ and the same degree $d$ in the following way: Since $V$ was chosen to be a general space of codimension $5$, $V$ does not intersect the curve $C$, and thus the image of $C$ under the projection from $V$ is a curve of degree $d$ and genus $2$ in $\mathbf P^4$. Moreover, the fact that $V$ does not intersect $\Sec_2(C)$ implies that the image curve is smooth as well.

\noindent A trisecant plane to $C\subseteq \mathbf P^{d-2}$ which intersects $V$ in one point projects down to a trisecant line to the image curve in $\mathbf P^4$.

\noindent Summarizing, the number of trisecant planes to $C\subseteq \mathbf P^{d-2}$ that intersect $V$ in one point is equal to the number of trisecant lines to the image curve in $\mathbf P^4$, and thus it follows that the degree of $\Sec_3(C)$ is equal to the number of trisecant lines to the image curve in $\mathbf P^4$.\\
Consequently, the degree of $\Sec_3(C)$ is equal to $\binom{d-2}{3}-2(d-4)$, and our motivation is now to compute the degree of $\Sec_3(C)$ in a different way, identifying $\Sec_3(C)$ as the union of all scrolls defined via a $g^1_3$ on $C$. 

\noindent Any abstract curve $C$ of genus $2$ can be embedded as a smooth curve of degree $d\geq 5$ into $\mathbf P^{d-2}$.\\
In this paper we restrict ourselves to the case $d\geq 8$, since for a curve $C$ of genus $2$ and degree $d=6$ or $d=7$, although Berzolari's formula of course being valid, taking Berzolari's formula to compute the degree of $\Sec_3(C)$ does not make sense, since for these values of $d$ the third secant variety $\Sec_3(C)$ is equal to the ambient space $\mathbf P^{d-2}$.
For $d=5$ the following holds: There are infinitely many trisecant lines to a curve $C\subseteq \mathbf P^3$ of genus $2$ and degree $5$, since $C$ lies on a quadric on which there exists a one-dimensional family of lines that each intersects $C$ in three points.

\section{Preliminaries}\label{preliminaries}

\noindent Let $C$ be a smooth curve of genus $2$ and degree $d\geq 8$ embedded in projective space $\mathbf P^{d-2}$.

\noindent For each $g^1_3$ on $C$, which we denote by $\vert D\vert$, we set 

$$V_{\vert D\vert}:=\overline{\bigcup_{D^{\prime}\in \vert D\vert}\Span(D^{\prime})},$$

\noindent where $\Span D^{\prime}$ denotes the plane spanned by the three points in $\vert D\vert$.

\noindent Each $V_{\vert D\vert}$ is a threedimensional rational normal scroll. (For general theory about rational normal scrolls we refer to \cite{EisenHarris}.)

\noindent We set $G^1_3(C):=\{g^1_3\textrm{'s on }C\}$. Since our aim is to identify $\Sec_3(C)=\bigcup_{\vert D\vert\in G^1_3(C)}V_{\vert D\vert}$, we want to find the dimension of $\bigcup_{\vert D\vert\in G^1_3(C)}V_{\vert D\vert}$, and for this purpose we need the dimension of the family $G^1_3(C)$, which we will now compute:

\begin{prop}\label{dimg13}
Let $C$ be a curve of genus $2$. The family $G^1_3(C)=\{g^1_3\textrm{'s on }C\}$ is two-dimensional.
\end{prop}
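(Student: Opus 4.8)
The plan is to identify $G^1_3(C)$ with a subvariety of $\Pic^3(C)$ and then show that in fact it is all of $\Pic^3(C)$. A $g^1_3$ on $C$ is by definition a one-dimensional linear subsystem of a complete linear system $\vert L\vert$, where $L$ is a line bundle of degree $3$; equivalently it is the data of such an $L$ together with a two-dimensional subspace $W\subseteq H^0(C,L)$. So the first step is to determine $h^0(C,L)$ for an arbitrary $L\in\Pic^3(C)$.

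The key computation is an application of Riemann--Roch. For $L$ of degree $3$ on a curve of genus $2$ we have
$$h^0(C,L)-h^1(C,L)=\deg L-g+1=3-2+1=2,$$
and by Serre duality $h^1(C,L)=h^0(C,K_C-L)$, where $\deg K_C=2g-2=2$. Since $\deg(K_C-L)=2-3=-1<0$, we get $h^0(C,K_C-L)=0$, hence $h^1(C,L)=0$ and therefore $h^0(C,L)=2$ for \emph{every} line bundle $L$ of degree $3$ on $C$. Consequently each such $L$ carries a unique linear system of (projective) dimension $1$, namely the complete system $\vert L\vert$ itself, and every $g^1_3$ is automatically complete.

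It follows that the assignment $\vert L\vert\mapsto L$ gives a bijection between $G^1_3(C)$ and $\Pic^3(C)$. Since $\Pic^3(C)$ is a torsor under $\Pic^0(C)=\Jac(C)$, which is an abelian variety of dimension $g=2$, we conclude that $G^1_3(C)$ is two-dimensional.

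The only point that requires any care is the vanishing $h^1(C,L)=0$, i.e. that no effective divisor of degree $3$ is ``special''; but on a genus $2$ curve this is immediate from the degree count above, since the canonical bundle has degree only $2$. Beyond this there is no real obstacle: one may, if desired, upgrade the bijection to an isomorphism of varieties by exhibiting a Poincar\'e line bundle on $C\times\Pic^3(C)$ and noting that its pushforward to $\Pic^3(C)$ is locally free of rank $2$, so that $G^1_3(C)$ is identified with $\Pic^3(C)$ as a scheme, but for the dimension count the bijection suffices.
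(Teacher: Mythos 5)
Your proof is correct. The core input is the same as the paper's, namely that Riemann--Roch forces $h^0(\mathcal O_C(D))=2$ for every divisor of degree $3$ on a genus $2$ curve (you spell out the nonspeciality $h^1=0$ via Serre duality and $\deg K_C=2$, which the paper leaves implicit), but the dimension count proceeds differently. The paper stays on the symmetric product side: $C_3$ is three-dimensional, each complete linear system $\vert D\vert$ is a one-dimensional fibre of the map $D\mapsto \vert D\vert$, so the base $G^1_3(C)$ is two-dimensional. You instead observe that since $h^0(L)=2$ for \emph{every} $L\in\Pic^3(C)$, every $g^1_3$ is complete and $G^1_3(C)$ is in bijection with $\Pic^3(C)$, a torsor under $\Jac(C)$, whence dimension $g=2$. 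Your route is slightly less elementary (it invokes the Jacobian and its dimension) but it yields more: the identification $G^1_3(C)\cong\Pic^3(C)$, together with your remark that the Poincar\'e pushforward is locally free of rank $2$, is exactly the structure the paper exploits later, where the scrolls $V_{\vert D\vert}$ are parametrized by points of $\Pic^3(C)$ and the bundle $\mathcal H=q_{\ast}\mathcal L$ of rank $2$ enters the degeneracy-locus computation; the paper's fibre count over $C_3$ gets the dimension with minimal machinery but without making that identification explicit.
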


\begin{proof}
If $D$ is a divisor of degree $3$ on $C$, then $h^0(\mathcal O_C(D))=2$ by the Riemann-Roch theorem for curves (see e.g. \cite{Hartshorne}, Thm. 1.3 in Chapter IV.1), i.e. each linear system $\vert D\vert$ of degree $3$ is a $g^1_3$ on $C$. The set of all effective divisors of degree $3$ on $C$ is given by $C_3:=(C\times C\times C)/S_3$, where $S_3$ denotes the symmetric group on $3$ letters. The dimension of this family is equal to $3$, and since each linear system $\vert D\vert$ of degree $3$ on $C$ has dimension $1$, as shown above, the family of $g^1_3$'s on $C$ has to be two-dimensional.
\end{proof}

\noindent We obtain that the dimension of $\bigcup_{\vert D\vert\in G^1_3(C)}V_{\vert D\vert}$ is equal to $5$, which is also the dimension of $\Sec_3(C)$, as we have seen in the introduction, and since each scroll $V_{\vert D\vert}$ obviously is contained in $\Sec_3(C)$, we obtain equality:
$$\Sec_3(C)=\bigcup_{\vert D\vert\in G^1_3(C)}V_{\vert D\vert}.$$

\noindent For an integer $k\geq 0$ we denote by $\Pic^k(C)$ the set of all line bundles of degree $k$ on $C$ modulo isomorphism.
In this paper we will consider $k=0$ and $k=3$.\\
We use the definition of the Jacobian variety of $C$, $\Jac(C)$, as in \cite{Milne}, namely that $\Jac(C)$ is defined as the abelian variety that represents the functor $T\to Pic^0(C/T)$ from schemes over the base field $k$ to abelian groups (cf. \cite{Milne}, Theorem 1.1).

\noindent By fixing a divisor $D_0$ of degree $3$ we obtain an isomorphism 

\vspace{-0.1cm}

$$\mu: \Pic^0(C)\to \Pic^3(C),$$
$$[\mathcal O_C(D)]\mapsto [\mathcal O_C(D+D_0)].$$

\vspace{0.1cm}

\noindent Hence $\Pic^3(C)$ is isomorphic to the Jacobian variety $\Jac(C)$.

\noindent Fixing a point $P_0$ on $C$ gives an embedding

\vspace{-0.1cm}

$$\nu:C\to \Jac(C),$$
$$R\mapsto [\mathcal O_C(R-P_0)].$$

\vspace{0.1cm}

\noindent The dimension of $\Jac(C)$ is equal to the genus of $C$, which is equal to $2$. Hence $\Jac(C)$ is an abelian surface. The theta divisor $\Theta$ on $\Jac(C)$ is the image of $C$ under the above map $\nu$.\\
For fixed points $P$ and $Q$ on $C$ we define 
$$\Theta_{P,Q}:=\{[\mathcal O_C(P+Q+R)]\vert R\in C\}.$$

\noindent $\Theta_{P,Q}$ is a divisor on $\Pic^3(C)$, and using the above isomorphism $\mu$ with $D_0=P+Q+P_0$ we see that the divisor $\Theta_{P,Q}$ is isomorphic to $\Theta$. It is this $\Theta_{P,Q}$ we will use in Sections \ref{chern} and \ref{degsec} when we consider $\Theta$ on $\Pic^3(C)$.

\begin{prop}\label{theta2}
The divisor $\Theta$ has self-intersection $\Theta^2=2$.
\end{prop}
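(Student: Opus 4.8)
The plan is to deduce $\Theta^2=2$ from the adjunction formula on the abelian surface $\Jac(C)$, using that $\Theta$ is a smooth curve isomorphic to $C$.

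First I would record two structural facts. By construction $\Theta=\nu(C)$, and since $\nu\colon C\to\Jac(C)$ is a closed embedding, $\Theta$ is a smooth irreducible curve on $\Jac(C)$ which is abstractly isomorphic to $C$; in particular it has genus $2$. Secondly, $\Jac(C)$ is an abelian variety, so its canonical bundle is trivial: a nowhere-vanishing translation-invariant holomorphic $2$-form trivializes $K_{\Jac(C)}$, i.e. $K_{\Jac(C)}\cong\mathcal O_{\Jac(C)}$.

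Next I would apply the adjunction formula to the smooth curve $\Theta\subset\Jac(C)$, namely $\omega_\Theta\cong\bigl(\mathcal O_{\Jac(C)}(\Theta)\otimes K_{\Jac(C)}\bigr)\big|_\Theta$. Taking degrees gives $2g(\Theta)-2=\Theta\cdot(\Theta+K_{\Jac(C)})=\Theta^2$, where the last equality uses the triviality of $K_{\Jac(C)}$. Since $g(\Theta)=2$, this yields $\Theta^2=2\cdot 2-2=2$, as claimed.

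There is no real obstacle here: the only point needing a word of justification is that $\Theta$ is a smooth (hence Cartier, irreducible) divisor so that adjunction applies in this elementary form, and that is immediate from $\nu$ being an embedding of the smooth curve $C$. If one preferred to avoid adjunction, an alternative is to combine Riemann--Roch on the abelian surface, $\chi\bigl(\mathcal O_{\Jac(C)}(\Theta)\bigr)=\frac{1}{2}\,\Theta\cdot(\Theta-K_{\Jac(C)})=\frac{1}{2}\,\Theta^2$ (using $\chi(\mathcal O_{\Jac(C)})=0$), with the fact that $\Theta$ is ample and defines a principal polarization, so that $h^i(\mathcal O_{\Jac(C)}(\Theta))=0$ for $i>0$ and $h^0=1$, forcing $\Theta^2=2$ again; but the adjunction argument is shorter and more self-contained.
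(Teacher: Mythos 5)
Your proof is correct, but it takes a genuinely different route from the paper. The paper computes $\Theta^2$ directly inside $\Pic^3(C)$: it replaces $\Theta^2$ by the intersection of two explicit translates $\Theta_{P,Q_1}$ and $\Theta_{P',Q_2}$ (with $P+P'\in\vert K_C\vert$ and $Q_1+Q_2\notin\vert K_C\vert$), and then enumerates the divisor classes in the intersection, finding exactly the two points $[\mathcal O_C(Q_1+Q_2+Q_1')]$ and $[\mathcal O_C(Q_1+Q_2+Q_2')]$; this stays entirely within the explicit description of $\Theta_{P,Q}$ that is reused later in the paper, but it implicitly relies on the two translates being algebraically equivalent to $\Theta$ and on the intersection being transverse so that each point counts with multiplicity one. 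Your adjunction argument sidesteps both of those points: once you know $\Theta=\nu(C)$ is a smooth genus-$2$ curve (which does require the brief justification you give, that the Abel--Jacobi map $\nu$ is a closed embedding for $g\geq 1$) and that $K_{\Jac(C)}$ is trivial, the formula $2g(\Theta)-2=\Theta\cdot(\Theta+K_{\Jac(C)})=\Theta^2$ gives the answer with no genericity choices or multiplicity checks. The only mild caution concerns your alternative via Riemann--Roch: invoking "$\Theta$ defines a principal polarization, so $h^0(\mathcal O(\Theta))=1$ and higher cohomology vanishes" is essentially equivalent in strength to the statement being proved, so it is better regarded as a consistency check than an independent proof; the adjunction argument, by contrast, is self-contained and perfectly adequate here, since only the intersection number on $\Pic^3(C)\cong\Jac(C)$ is needed in the degree computation.
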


\begin{proof}
Choose points $P$, $P^{\prime}$, $Q_1$ and $Q_2$, $Q_1\neq Q_2$, on $C$ such that $P+P^{\prime}$ is a divisor in the canonical system on $C$, $\vert K_C\vert$, and such that $Q_1+Q_2$ is not a divisor in $\vert K_C\vert$. There exist points $Q_1^{\prime}$ and $Q_2^{\prime}$ on $C$ such that $Q_1+Q_1^{\prime}\in \vert K_C\vert$ and $Q_2+Q_2^{\prime}\in \vert K_C\vert$. We obtain the following:

\vspace{-0.5cm}

\begin{eqnarray*}
\Theta^2&=&\Theta_{P,Q_1}.\Theta_{P^{\prime},Q_2}\\
&=&\#\{[\mathcal O_C(Q_1+Q_2+R)]\vert R\in \{Q_1^{\prime},Q_2^{\prime}\}\}\\
&=&2.
\end{eqnarray*}
\end{proof}

\noindent Consider now the following projections:

\vspace{0.4cm}

\begin{tabular}{lccr}
\xymatrix{
&C\times \Pic^3(C)\ar[dl]_p\ar[dr]^q&\\
C&&\Pic^3(C)\\
}
&
&&
\xymatrix{
&C\times C_3\ar[dl]\ar[dr]^{\pi}&\\
C&&C_3\hspace{0.1cm}\\
}\\
\end{tabular}

\vspace{0.3cm}

\noindent Let $P$ be a point on $C$ such that $2P$ is a divisor in the canonical system $\vert K_C\vert$, and set $f:=p^{\ast}(P)$.

\noindent In the rest of this paper we will use the notation $P$ and $f$ both as varieties and as classes.

\noindent As before, we define $C_3$ to be the threedimensional family of all effective divisors of degree $3$ on $C$.

\noindent Let $\Delta$ be the universal divisor on $C\times C_3$, i.e. $\Delta\vert_{C\times \{D\}}\cong D$ for all $D\in C_3$.\\
For any point $Q$ on $C$ set $X_Q:=\{D\in C_3\vert Q\in D\}$, which is a divisor on $C_3$.\\ 
Finally, let $u:C_3\to \mathcal \Pic^3(C)$ be the map given by $u(D):=[\mathcal O_C(D)]$.

\noindent Now we are able to define a line bundle $\mathcal L$ on $C\times \Pic^3(C)$ which turns out to be a Poincaré line bundle. In Section \ref{degsec} we will compute the degree of $\Sec_3(C)$ by identifying $\Sec_3(C)$ as a degeneracy locus of a map of vector bundles involving this Poincaré line bundle $\mathcal L$.

\section{The Poincaré line bundle $\mathcal L$}\label{poincare}

\noindent We will first give the definition of a Poincaré line bundle:

\begin{defn}
A \textup{Poincaré line bundle of degree $k$} is a line bundle $\mathcal L$ on $C\times \Pic^k(C)$ such that $\mathcal L\vert_{C\times [\mathcal O_{C}(D)]}\cong \mathcal O_C(D)$ for all points $[\mathcal O_C(D)]$ in $\Pic^k(C)$.
\end{defn}

\noindent Set $\mathcal L:=(1\times u)_{\ast}(\mathcal O_{C\times C_3}(\Delta-\pi^{\ast}(X_Q)))$.
$\mathcal L$ is a Poincaré line bundle of degree $3$ (cf. \cite{ACGH}, Chapter IV, §2, p. 167).

\noindent Let $\vert H\vert$ be the linear system of degree $d$ that embeds $C$ into projective space $\mathbf P^{d-2}$. Set $\mathcal H:=q_{\ast}(\mathcal L)$ and $\mathcal G:=q_{\ast}(p^{\ast}\mathcal O_C(H)\otimes \mathcal L^{-1})$.

\noindent Since the fiber of $\mathcal H$ over a point $[\mathcal O_C(D)]\in \mathcal \Pic^3(C)$ is equal to $H^0(\mathcal O_C(D))$, the rank of $\mathcal H$ is equal to $h^0(\mathcal O_C(D))=2$, and since the fiber of $\mathcal G$ over a point $[\mathcal O_C(D)]\in \mathcal \Pic^3(C)$ is equal to $H^0(\mathcal O_C(H-D))$, the rank of $\mathcal G$ is equal to $h^0(\mathcal O_C(H-D))=d-4$.\\
We will use these vector bundles $\mathcal H$ and $\mathcal G$ in Section \ref{degsec} to define a map of vector bundles which degeneracy locus is equal to the third secant variety of $C$, $\Sec_3(C)$.
We will need the Chern classes of $\mathcal H$ and $\mathcal G$, and for this purpose we need the Chern classes of $\mathcal L$.
We will find all of these Chern classes in the next section.
 
\section{The Chern classes of $\mathcal L$, $\mathcal H$ and $\mathcal G$}\label{chern}

\noindent In this section we will find the Chern classes of $\mathcal L$, $\mathcal H$ and $\mathcal G$ as defined in Section \ref{poincare}.

\subsection{The Chern class of $\mathcal L$}

\noindent By \cite{ACGH}, Chapter VIII, §2 (pp. 333-336) we obtain that the first Chern class of $\mathcal L$ is equal to 
$c_1(\mathcal L)=3f+\gamma$, where $\gamma$ is the diagonal component of $c_1(\mathcal L)$ in the term $H^1(C)\otimes H^1(\Pic^3(C))$ of the Künneth decomposition

\begin{eqnarray*}
H^2(C\times \Pic^3(C))&=&\hspace{-0.2cm}(H^2(C)\otimes H^0(\Pic^3(C)))\\
&\oplus&\hspace{-0.2cm} (H^1(C)\otimes H^1(\Pic^3(C)))\\
&\oplus& \hspace{-0.2cm}(H^0(C)\otimes H^2(\Pic^3(C))).
\end{eqnarray*}

\vspace{0.1cm}

\noindent The following is satisfied: $\gamma^2=-2f.q^{\ast}(\Theta)$, $\gamma^3=f.\gamma=0$, where now $\Theta$ on $\Pic^3(C)$ is equal to $\Theta_{P,Q}$ as defined in Section \ref{preliminaries}.

\noindent Thus for the Chern character of $\mathcal L$ we obtain:
$$\ch(\mathcal L)=e^{c_1(\mathcal L)}=1+3f+\gamma-f.q^{\ast}(\Theta).$$

\subsection{The Chern classes of $\mathcal H$}

\noindent Recall that we had defined $\mathcal H:=q_{\ast}\mathcal L$. The Chern character of $\mathcal H$ we obtain by the Grothendieck-Riemann-Roch Theorem (cf. \cite{Fulton}, Thm. 15.2):

\vspace{-0.1cm}

$$\ch(q_{\ast}(\mathcal L)).\td(\Pic^3(C))=q_{\ast}(\ch(\mathcal L).\td(C\times \Pic^3(C))).$$

\vspace{0.1cm}

\noindent Before we can continue our computation of $\ch(\mathcal H)$ we need some Todd classes and pushforwards.

\begin{defn}\label{toddclass}(cf. \cite{Fulton}, Example 3.2.4)
The Todd class of a vector bundle $E$ of rank $r$ on a variety $X$ is defined as
$$\td(E)=\prod_{i=1}^r\frac{\alpha_i}{1-e^{\alpha_i}},$$
\noindent where $\alpha_1,\ldots,\alpha_r$ are the Chern roots of $E$.\\
If $Y$ is a variety, then by $\td(Y)$ we denote $\td(T_Y)$, the Todd class of the tangent bundle of $Y$.
\end{defn}

\noindent We will need Todd classes only in the cases when the dimension of $X$ is equal to $1$ or $2$. In these cases $c_i(E)=0$ for $i\geq 3$, and expanding the above product yields:
$$\td(E)=1+\frac{1}{2}c_1(E)+\frac{1}{12}(c_1^2(E)+c_2(E)).$$

\begin{lemma}\label{todd}
We have the following Todd classes:

\begin{itemize}
\item[(1)] $\td(\Pic^3(C))=1$.
\item[(2)] $\td(C)=1-P$.
\item[(3)] $\td(C\times \Pic^3(C))=1-f$.
\end{itemize}
\end{lemma}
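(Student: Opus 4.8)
The three Todd classes are all consequences of the general formula for the Todd class of a curve or a surface that was recorded just before the statement, namely $\td(E)=1+\tfrac12 c_1(E)+\tfrac1{12}(c_1^2(E)+c_2(E))$ applied to the tangent bundle, combined with the Whitney product formula $\td(X\times Y)=\pr_X^\ast\td(X)\cdot \pr_Y^\ast\td(Y)$ for a product. So the whole computation reduces to identifying the relevant Chern classes of the tangent bundles of $C$ and of $\Pic^3(C)$.

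For (1): $\Pic^3(C)\cong \Jac(C)$ is an abelian variety, so its tangent bundle is trivial of rank $2$; hence $c_1=c_2=0$ and the formula gives $\td(\Pic^3(C))=1$. For (2): $C$ has genus $2$, so $\deg K_C=2g-2=2$, i.e. $c_1(T_C)=-K_C$ has degree $-2$; since we have fixed a point $P$ on $C$ with $2P\in|K_C|$, the class of $K_C$ in $H^2(C)$ equals $2P$ (any two points of a curve are linearly — hence numerically — equivalent), so $c_1(T_C)=-2P$, and since $\dim C=1$ there is no $c_2$ term and no $c_1^2$ term (it lives in degree $2$ on a curve, hence vanishes). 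Thus $\td(C)=1+\tfrac12(-2P)=1-P$. For (3): by the product formula $\td(C\times\Pic^3(C))=p^\ast\td(C)\cdot q^\ast\td(\Pic^3(C))=p^\ast(1-P)\cdot 1=1-p^\ast(P)=1-f$, using the definition $f:=p^\ast(P)$.

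The only point that requires a word of care — and the place I would be most careful — is the passage from "$K_C$ has degree $2$" to "$K_C\equiv 2P$ as a cohomology class", i.e. that the chosen $P$ with $2P\in|K_C|$ really does let us write $c_1(T_C)=-2P$ on the nose rather than merely up to numerical equivalence; but on a smooth projective curve linear equivalence implies equality of divisor classes in $\Pic$, and $H^2(C,\mathbf Z)\cong\mathbf Z$ is detected by degree, so there is nothing to worry about: $c_1(T_C)=-[K_C]=-[2P]=-2P$ in $H^2(C)$. Everything else is a direct substitution into the degree-$\le 2$ Todd formula and the Künneth/Whitney multiplicativity, so there is no real obstacle.
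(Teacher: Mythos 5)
Your proof is correct and follows essentially the same route as the paper: triviality of the tangent bundle of the abelian surface $\Jac(C)\cong\Pic^3(C)$ for (1), $c_1(T_C)=-[K_C]=-2P$ from the choice $2P\in\vert K_C\vert$ for (2), and multiplicativity of the Todd class on the product for (3). The only blemish is the parenthetical claim that any two points of a curve are linearly equivalent (false for genus $2$), but it is harmless since what you actually use is the linear equivalence $K_C\sim 2P$ and the fact that $H^2(C)$ only sees degree.
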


\begin{proof}
\noindent\begin{itemize}
\item[(1)] Since $\Pic^3(C)\cong \Jac(C)$ is an abelian variety, we have $K_{\Pic^3(C)}=0$ and thus also $c_1(T_{\Pic^3(C)})=0$.
\item[(2)] $\td(C)=1+\frac{1}{2}c_1(T_C)=1-\frac{1}{2}[K_C]=1-P$.
\item[(3)] $\td(C\times \Pic^3(C))=\td(p^{\ast}(C)).\td(q^{\ast}\Pic^3(C))=1-f$.
\end{itemize}

\vspace{-0.5cm}

\end{proof}

\begin{lemma}\label{push}
We have the following pushforwards:

\begin{itemize}
\item[(1)] $q_{\ast}(1)=0$.
\item[(2)] $q_{\ast}(f)=1$.
\item[(3)] $q_{\ast}(\gamma)=0$.
\end{itemize}
\end{lemma}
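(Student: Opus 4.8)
The plan is to compute the three pushforwards $q_\ast(1)$, $q_\ast(f)$, and $q_\ast(\gamma)$ by tracking degrees under the projection $q\colon C\times\Pic^3(C)\to\Pic^3(C)$, whose fibres are copies of $C$, hence one-dimensional. The governing principle is that $q_\ast$ lowers cohomological degree by $2\dim C = 2$ (equivalently, it integrates a class over the fibre $C$), so a class living in $H^j(C\times\Pic^3(C))$ maps to a class in $H^{j-2}(\Pic^3(C))$, and in particular $q_\ast$ kills anything whose component along $H^2(C)$ in the K\"unneth decomposition vanishes.

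For (1), $1\in H^0(C\times\Pic^3(C))$ sits in degree $0$, so $q_\ast(1)$ would live in ``degree $-2$'' and is therefore $0$; more concretely, $q_\ast(1)$ is the class obtained by integrating the constant function $1$ over the positive-dimensional fibre $C$, which gives $0$. For (2), recall $f=p^\ast(P)$ where $P$ is a point of $C$; since $\int_C P = 1$ (a point has degree $1$ on the curve), the projection formula gives $q_\ast(f)=q_\ast(p^\ast(P)) = \big(\int_C P\big)\cdot 1 = 1\in H^0(\Pic^3(C))$. For (3), $\gamma$ is by construction the K\"unneth component of $c_1(\mathcal L)$ in $H^1(C)\otimes H^1(\Pic^3(C))$; writing $\gamma=\sum_i \alpha_i\otimes\beta_i$ with $\alpha_i\in H^1(C)$ and $\beta_i\in H^1(\Pic^3(C))$, the pushforward $q_\ast$ integrates the $C$-factor, and each $\int_C \alpha_i = 0$ since $\alpha_i$ has degree $1 < 2 = \dim_{\mathbb R} C$; hence $q_\ast(\gamma)=0$.

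I do not anticipate a genuine obstacle here: all three identities are instances of the basic fact that fibre integration along a curve annihilates classes of cohomological degree $\le 1$ on that curve and sends the (normalized) point class to $1$. The only thing to be careful about is the bookkeeping of which K\"unneth summand each class occupies, and to invoke the projection formula cleanly for $q_\ast(p^\ast(P))$; one should also note that $f^2 = p^\ast(P)^2 = p^\ast(P^2) = 0$ since $P^2=0$ on the curve $C$, which is implicitly consistent with everything above. I would present the proof as three short bullet-style items mirroring the statement, citing the projection formula and the dimension/degree reason in each case.

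\begin{proof}
All three computations rest on the fact that $q$ has one-dimensional fibres isomorphic to $C$, so that $q_\ast$ integrates over $C$ and lowers cohomological degree by $2$; in particular $q_\ast$ annihilates any class whose K\"unneth component in $H^2(C)\otimes H^\ast(\Pic^3(C))$ vanishes.
\begin{itemize}
\item[(1)] The class $1$ lies in $H^0$, so $q_\ast(1)$ would lie in negative degree; equivalently, integrating the constant $1$ over the positive-dimensional fibre $C$ gives $q_\ast(1)=0$.
\item[(2)] Since $f=p^\ast(P)$ and $P$ is the class of a point on $C$, with $\int_C P=1$, the projection formula gives $q_\ast(f)=q_\ast(p^\ast(P))=\big(\int_C P\big)\cdot 1=1$.
\item[(3)] By definition $\gamma$ is the component of $c_1(\mathcal L)$ in $H^1(C)\otimes H^1(\Pic^3(C))$. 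Writing $\gamma=\sum_i\alpha_i\otimes\beta_i$ with $\alpha_i\in H^1(C)$, each $\alpha_i$ has degree $1<\dim_{\mathbb R}C$, so $\int_C\alpha_i=0$ and hence $q_\ast(\gamma)=0$.
\end{itemize}
\end{proof}
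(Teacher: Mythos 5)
Your proof is correct, but it runs along a different track from the paper's. You work in the K\"unneth decomposition of $H^{\ast}(C\times \Pic^3(C))$ and use the fact that $q_{\ast}$ is integration over the fibre $C$, so it kills everything except the components lying in $H^2(C)\otimes H^{\ast}(\Pic^3(C))$: this disposes of (1) and (3) purely by bi-degree bookkeeping (in particular, for (3) you never need to know anything about $\mathcal L$ beyond the fact that $\gamma$ is, by definition, the $H^1(C)\otimes H^1(\Pic^3(C))$-component of $c_1(\mathcal L)$), and it gives (2) from $\int_C P=1$. The paper instead argues cycle-theoretically: for (1) it notes that the image of the fundamental class has strictly smaller dimension; for (2) and (3) it writes $q_{\ast}(f)$, resp.\ $q_{\ast}(\gamma)$, as $a[\Pic^3(C)]$ and determines $a$ by intersecting with the fibre class $q^{\ast}[\mathcal O_C(D_0)]$ via the projection formula, using for (3) that $\gamma=c_1(\mathcal L)-3f$ and that $\mathcal L$ restricts to a degree-$3$ line bundle on each fibre, yielding $a=3-3=0$. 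Your route is arguably cleaner for (3) since it bypasses the Poincar\'e-bundle degree computation, at the cost of invoking the compatibility of $q_{\ast}$ with the K\"unneth decomposition; the paper's route stays entirely inside the projection-formula formalism it uses elsewhere. One small label quibble: in (2) the identity $q_{\ast}p^{\ast}(P)=\bigl(\int_C P\bigr)\cdot 1$ is really fibre integration (push--pull for the product, i.e.\ flat base change), not the projection formula in its usual form $q_{\ast}(x\cdot q^{\ast}y)=q_{\ast}(x)\cdot y$; the conclusion is unaffected.
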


\begin{proof}
\noindent \begin{itemize}
\item[(1)] $q_{\ast}(1)=q_{\ast}([C\times \Pic^3(C)])=0$, since $\dim(q(C\times \Pic^3(C)))=\dim(\Pic^3(C))=2<3=\dim(C\times \Pic^3(C))$.
\item[(2)] Since $q(f)=\Pic^3(C)$ has the same dimension as $f$, we have $q_{\ast}(f)=a[\Pic^3(C)]$ for a positive integer $a$. By the projection formula (cf. \cite{Fulton}, Prop. 2.5(c)) we obtain for every point $[\mathcal O_C(D_0)]\in \Pic^3(C)$:

\vspace{-0.8cm}

\begin{eqnarray*}
a&=&q_{\ast}(f).[\mathcal O_C(D_0)]=q_{\ast}(f.q^{\ast}[\mathcal O_C(D_0)])=f.q^{\ast}[\mathcal O_C(D_0)]\\
&=&[P\times \Pic^3(C)].[C\times \mathcal O_C(D_0)]=1,
\end{eqnarray*}

\noindent where we could use the equality $q_{\ast}(f.q^{\ast}[\mathcal O_C(D_0)])=f.q^{\ast}[\mathcal O_C(D_0)]$, since $f.q^{\ast}[\mathcal O_C(D_0)]$ is $0$-dimensional.
\item[(3)] Since $\gamma$ is of codimension $1$ on $C\times \Pic^3(C)$, $q_{\ast}(\gamma)=a[\Pic^3(C)]$ for some non-negative integer $a$. By the projection formula we have for every point $[\mathcal O_C(D_0)]\in \Pic^3(C)$:

\vspace{-0.6cm}

\begin{eqnarray*}
a&=&q_{\ast}(\gamma).[\mathcal O_C(D_0)]=q_{\ast}(\gamma.q^{\ast}[\mathcal O_C(D_0)])=\gamma.q^{\ast}[\mathcal O_C(D_0)]\\
&=&c_1(\mathcal L).q^{\ast}[\mathcal O_C(D_0)]
-q_{\ast}(3f)=3-3=0,
\end{eqnarray*}

\noindent where, analogously to (2), we could use the equality $q_{\ast}(\gamma.q^{\ast}[\mathcal O_C(D_0)])=\gamma.q^{\ast}[\mathcal O_C(D_0)]$ since $\gamma.q^{\ast}[\mathcal O_C(D_0)]$ is $0$-dimensional.
\end{itemize}
\end{proof}

\noindent Now, by Lemma \ref{todd} we obtain:

\vspace{-0.4cm}

\begin{eqnarray*}
\ch(\mathcal H)&=&\ch(q_{\ast}(\mathcal L))\\
&=&q_{\ast}(\ch(\mathcal L).(1-f))=q_{\ast}((1+3f+\gamma-f.q^{\ast}(\Theta)).(1-f))\\
&=&q_{\ast}(1+2f+\gamma-f.q^{\ast}(\Theta)).
\end{eqnarray*}

\vspace{0.1cm}

\noindent By Lemma \ref{push} and the projection formula we can conclude:
$$\ch(\mathcal H)=2-q_{\ast}(f).\Theta=2-\Theta.$$

\vspace{0.1cm}

\noindent Consequently we obtain for the Chern polynomial of $\mathcal H$:

\begin{equation}\label{cpolyH}
c_t(\mathcal H)=e^{-\Theta t}.
\end{equation}

\subsection{The Chern classes of $\mathcal G$}

\noindent Now we want to find the Chern classes of the vector bundle $\mathcal G:=q_{\ast}(p^{\ast}(\mathcal O_C(H)\otimes \mathcal L^{-1}))$, where $\vert H\vert$ denotes the linear system of degree $d$ that embeds $C$ into projective space. In order to do so we use again the Grothendieck-Riemann-Roch formula:
$$\ch(q_{\ast}(p^{\ast}(\mathcal O_C(H)\otimes \mathcal L^{-1}))).\td(\Pic^3(C))=q_{\ast}(\ch(p^{\ast}(\mathcal O_C(H)\otimes \mathcal L^{-1})).\td(C\times \Pic^3(C))).$$

\vspace{0.1cm}

\noindent By Lemma \ref{todd}, Lemma \ref{push} and the projection formula we obtain

\vspace{-0.2cm}

\begin{eqnarray*}
\ch(\mathcal G)&=&q_{\ast}(\ch(p^{\ast}(\mathcal O_C(H)\otimes \mathcal L^{-1}).(1-f))\\
&=&q_{\ast}(p^{\ast}(\ch(\mathcal O_C(H))).\ch(\mathcal L^{-1}).(1-f))\\
&=&q_{\ast}(1+p^{\ast}(H)).(1-3f-\gamma-f.q^{\ast}(\Theta)).(1-f))\\
&=&q_{\ast}((1+df).(1-4f-\gamma-f.q^{\ast}(\Theta)))\\
&=&q_{\ast}(1+(d-4)f-\gamma-f.q^{\ast}(\Theta))\\
&=&d-4-\Theta.
\end{eqnarray*}

\vspace{0.1cm}

\noindent This yields for the Chern polynomial of $\mathcal G$:

\begin{equation}\label{cpolyG}
c_t(\mathcal G)=e^{-\Theta t}.
\end{equation}

\vspace{0.2cm}

\section{The degree of $\Sec_3(C)$}\label{degsec}
\noindent Set $E:=\mathcal G \boxtimes \mathcal O_{\mathbf P^{d-2}}(-1)$ and $F:=\mathcal H^{\ast}\boxtimes \mathcal O_{\mathbf P^{d-2}}$. These are two vector bundles on $\Pic^3(C)\times \mathbf P^{d-2}$. The rank of $E$ is equal to $d-4$, and the rank of $F$ is equal to $2$.\\
The multiplication of fibers $$H^0(\mathcal O_C(H-D))\otimes H^0(\mathcal O_C(D))\to H^0(\mathcal O_C(H))$$ induces a map of vector bundles $\Phi:E\to F$.
Set
$$X_1:=X_1(\Phi):=\{x\in \Pic^3(C)\times \mathbf P^{d-2}\vert \rk (\Phi_x)\leq 1\}$$
Consider the two projections

\vspace{0.3cm}

\xymatrix{
&&&&&\Pic^3(C) \times \mathbf P^{d-2}\ar[dr]^{p_2}\ar[dl]_{p_1}&\\
&&&&\Pic^3(C)&&\mathbf P^{d-2}.\\
}

\vspace{0.3cm}

\noindent We have the following:
\begin{itemize}
\item[(i)] Over every point $[\mathcal O_C(D)]\in \Pic^3(C)$ the fiber of $p_1\vert X_1$ is a $3$-dimensional rational normal scroll $V_{\vert D\vert}\subseteq [\mathcal O_C(D)]\times \mathbf P^{d-2} \cong \mathbf P^{d-2}$.
\item[(ii)] The image of such a fiber under the projection $p_2$ is thus the rational normal scroll $V_{\vert D\vert}$ in $\mathbf P^{d-2}$.
\item[(iii)] Consequently, $p_2(X_1)$ is the union of all $g^1_3(C)$-scrolls $V_{\vert D\vert}$ in $\mathbf P^{d-2}$ which again is equal to $\Sec_3(C)$.
\end{itemize}

\noindent Set $x_1$ to be the class of $X_1$. From the above we have $(p_2)_{\ast}(x_1)=[\Sec_3(C)]$. Let $h^{\prime}\subseteq \mathbf P^{d-2}$ be a hyperplane class and set $h:=(p_2)^{\ast}(h^{\prime})\subseteq \Pic^3(C)\times \mathbf P^{d-2}$.\\
Since $\Sec_3(C)\subseteq \mathbf P^{d-2}$ has dimension $5$, we obtain the degree of $\Sec_3(C)$ by intersecting with $(h^{\prime})^5$.\\
Now we have the following:

\vspace{-0.3cm}

\begin{eqnarray*}
\deg(\Sec_3(C))&=&[Sec^3(C)].(h^{\prime})^5=(p_2)_{\ast}(x_1).(h^{\prime})^5\\
&=&(p_2)_{\ast}(x_1.p_2^{\ast}(h^{\prime})^5)=(p_2)_{\ast}(x_1.h^5)=x_1.h^5.\\
\end{eqnarray*}

\vspace{-0.2cm}

\noindent That is, now we have to find the class $x_1$ of $X_1(\Phi)$.\\
Since $X_1(\Phi)$ has expected dimension $5=\dim(\Pic^3(C)\times \mathbf P^{d-2})-(d-4-1)(2-1)$, by Porteous' formula (\cite{ACGH}, Chapter II, (4.2)) we obtain the following:

\vspace{-0.2cm}

\begin{eqnarray*}
x_1&=&\Delta_{1,d-5}(c_t(F-E))\\
&&\\
&=&\det \underbrace{\left(\begin{array}{cccccc}
c_1&c_2&c_3&\cdots&c_{d-6}&c_{d-5}\\
1&c_1&c_2&\cdots&c_{d-7} &c_{d-6}\\
0&1&c_1&\cdots &c_{d-8}&c_{d-7}\\
&&&\ddots&\vdots&\vdots\\
0&0&0&\cdots&c_1&c_2\\
0&0&0&\cdots&1&c_1\\
\end{array}\right)}_{=:\mathcal A_{d-5}},
\end{eqnarray*}

\noindent where $c_i:=c_i(F-E)$, and $c_i(F-E)$ is defined via $c_t(F-E):=\frac{c_t(F)}{c_t(E)}$.\\
Again we use $\Theta_{P,Q}$ as defined in Section \ref{preliminaries} when we talk about $\Theta$ on $\Pic^3(C)\cong \Jac(C)$.\\
Equations (\ref{cpolyH}) and (\ref{cpolyG}) gave us the following Chern polynomials:
$$c_t(\mathcal H)=c_t(\mathcal G)=e^{-\Theta t}.$$
We thus obtain
$$c_t(F)=c_t(p_1^{\ast}\mathcal H^{\ast})=c_{-t}(p_1^{\ast}\mathcal H)=e^{p_1^{\ast}\Theta t}.$$
We compute $c_t(E)$:\\
Let $\alpha_i$ be the Chern roots of $\mathcal G$, i.e. $c_t(\mathcal G)=\prod_{i=1}^{d-4}{(1+\alpha_i t)}$, and set $\beta_i:=p_1^{\ast}(\alpha_i)$. 

\noindent Then we obtain the following:

\begin{eqnarray*}
c_t(E)&=&\prod_{i=1}^{d-4}\left(1+\left(\beta_i-h\right)t\right)=\prod_{i=1}^{d-4}\left(1-ht\right)\left(1+\beta_i \frac{t}{1-ht}\right)\\
&=&(1-ht)^{d-4}\prod_{i=1}^{d-4}\left(1+\beta_i \frac{t}{1-ht}\right)=(1-ht)^{d-4}c_{\frac{t}{1-ht}}\left(p_1^{\ast}\mathcal G\right)\\
&=&(1-ht)^{d-4}e^{\frac{-p_1^{\ast}\Theta t}{1-ht}}.\\
\end{eqnarray*}

\noindent In the following we will identify $\Theta$ with $p_1^{\ast}(\Theta)$, it will be clear from the context if we mean $\Theta$ on $\Pic^3(C)$ or $\Theta$ on $\Pic^3(C)\times \mathbf P^{d-2}$.\\
We conclude now: 

\vspace{-0.2cm}

\begin{eqnarray*}
c_t(F-E)&=&e^{\Theta t}(1-ht)^{4-d}e^{\frac{\Theta t}{1-ht}}=(1-ht)^{4-d}e^{\frac{2\Theta t-\Theta.h t^2}{1-ht}}\\
&=&(1-ht)^{4-d}\sum_{j=0}^{\infty}{\frac{1}{j!}(2\Theta t-\Theta.h t^2)^j(1-ht)^{-j}}\\
&=&\sum_{j=0}^{\infty}{(1-ht)^{4-d-j}\frac{1}{j!}(2\Theta t-\Theta.h t^2)^j}.
\end{eqnarray*}

\noindent Since $\Theta^3=0$, we only get some contribution from $j=0,1,2$ and thus obtain the following:

\vspace{-0.2cm}

\begin{eqnarray*}
c_t(F-E)&=&(1-ht)^{4-d}+(1-ht)^{3-d}(2\Theta t-\Theta.ht^2)\\
&+&\frac{1}{2}(1-ht)^{2-d}(4\Theta^2 t^2-4\Theta^2.ht^3+\Theta^2.h^2t^4)\\
&=&\sum_{k=0}^{\infty}\binom{d+k-3}{k}h^kt^{k}\\
&+&\sum_{k=0}^{\infty}\binom{d+k-3}{k}(2\Theta.h^k-2h^{k+1})t^{k+1}\\
&+&\sum_{k=0}^{\infty}\binom{d+k-3}{k}(2\Theta^2.h^k-3\Theta.h^{k+1}+h^{k+2})t^{k+2}\\
&+&\sum_{k=0}^{\infty}\binom{d+k-3}{k}(\Theta.h^{k+2}-2\Theta^2.h^{k+1})t^{k+3}\\
&+&\sum_{k=0}^{\infty}\frac{1}{2}\binom{d+k-3}{k}\Theta^2.h^{k+2}t^{k+4}.
\end{eqnarray*}

\noindent This implies that

\begin{eqnarray*}
c_i(F-E)&=&\left(\binom{d-3+i}{i}-2\binom{d-4+i}{i-1}+\binom{d-5+i}{i-2}\right)h^i\\
&+&\left(2\binom{d-4+i}{i-1}-3\binom{d-5+i}{i-2}+\binom{d-6+i}{i-3}\right)\Theta.h^{i-1}\\
&+&\left(2\binom{d-5+i}{i-2}-2\binom{d-6+i}{i-3}+\frac{1}{2}\binom{d-7+i}{i-4}\right)\Theta^2.h^{i-2}\\
&=&\binom{d-5+i}{i}h^i\\
&+&\left(\binom{d-5+i}{i-1}+\binom{d-6+i}{i-1}\right)\Theta.h^{i-1}\\
&+&\left(2\binom{d-6+i}{i-2}+\frac{1}{2}\binom{d-7+i}{i-4}\right)\Theta^2.h^{i-2}.
\end{eqnarray*}

\vspace{0.1cm}

\noindent Now the last step in the computation of $x_1$ is to find the determinant of the matrix $\mathcal A_{d-5}$:
\begin{prop}\label{determinant}
Set $c_i:=c_i(F-E)$, where $c_i(F-E)$ is defined via $c_t(F-E):=\frac{c_t(F)}{c_t(E)}$.\\
For $d\geq 8$ the determinant of the matrix

$$
\mathcal A_{d-5}=\left(\begin{array}{cccccc}
c_1&c_2&c_3&\cdots&c_{d-6}&c_{d-5}\\
1&c_1&c_2&\cdots&c_{d-7} &c_{d-6}\\
0&1&c_1&\cdots &c_{d-8}&c_{d-7}\\
&&&\ddots&\vdots&\vdots\\
0&0&0&\cdots&c_1&c_2\\
0&0&0&\cdots&1&c_1\\
\end{array}\right)
$$

\noindent is equal to 

\begin{eqnarray*}
\mathcal D_{d-5}&=&\left(\frac{1}{2}\binom{d-2}{3}-(d-4)\right)\Theta^2.h^{d-7}\\
&+&\left(\binom{d-3}{2}-1\right)\Theta.h^{d-6}\\
&+&(d-4)h^{d-5}.\\
\end{eqnarray*}
\end{prop}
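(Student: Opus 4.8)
The plan is to avoid expanding the $(d-5)\times(d-5)$ determinant directly and instead to identify $\det\mathcal A_{d-5}$ as the homogeneous degree-$(d-5)$ part of an explicit generating series built from $c_t(F-E)$. First I would set $\mathcal D_n:=\det\mathcal A_n$ for the $n\times n$ matrix of the same Toeplitz shape, $(\mathcal A_n)_{ij}=c_{j-i+1}$ with $c_0=1$ and $c_k=0$ for $k<0$, together with $\mathcal D_0:=1$. Laplace expansion of $\det\mathcal A_n$ along its first column $(c_1,1,0,\dots,0)^{T}$, together with a straightforward induction applied to the minors that successively arise, yields the recursion $\mathcal D_n=\sum_{k=1}^n(-1)^{k-1}c_k\,\mathcal D_{n-k}$ for $n\ge1$. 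This is precisely the assertion that
$$\Big(\sum_{n\ge0}\mathcal D_n t^n\Big)\Big(\sum_{k\ge0}(-1)^k c_k t^k\Big)=1,\qquad\text{i.e.}\qquad \sum_{n\ge0}\mathcal D_n t^n=\frac1{c_{-t}(F-E)};$$
alternatively one may simply quote this as a specialisation of the (dual) Jacobi--Trudi identity, cf.\ \cite{ACGH}, Chapter II.

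Next I would feed in the Chern polynomial already computed just before the statement. From $c_t(F-E)=(1-ht)^{4-d}\exp\!\big(\tfrac{2\Theta t-\Theta.h\,t^2}{1-ht}\big)$, replacing $t$ by $-t$ and inverting gives
$$\sum_{n\ge0}\mathcal D_n t^n=(1+ht)^{\,d-4}\,\exp\!\Big(\frac{2\Theta t+\Theta.h\,t^2}{1+ht}\Big).$$
Since $\Theta^3=0$ on $\Pic^3(C)\times\mathbf P^{d-2}$, only the constant, linear and quadratic parts of the exponential survive, so the right-hand side collapses to the three-term expression
$$(1+ht)^{d-4}+\Theta\,t\,(2+ht)(1+ht)^{d-5}+\tfrac12\,\Theta^2\,t^2(2+ht)^2(1+ht)^{d-6}.$$

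Finally I would read off the coefficient of $t^{d-5}$. Using $\Theta^3=0$ and $h^{d-1}=0$, the only codimension-$(d-5)$ monomials that can occur are $h^{d-5}$, $\Theta.h^{d-6}$ and $\Theta^2.h^{d-7}$, and each of the three summands above contributes a single binomial expression to exactly one of them: $\binom{d-4}{d-5}$ to $h^{d-5}$; $2\binom{d-5}{d-6}+\binom{d-5}{d-7}$ to $\Theta.h^{d-6}$; and $2\binom{d-6}{d-7}+2\binom{d-6}{d-8}+\tfrac12\binom{d-6}{d-9}$ to $\Theta^2.h^{d-7}$. Simplifying these three quantities to $d-4$, $\binom{d-3}{2}-1$ and $\tfrac12\binom{d-2}{3}-(d-4)$ respectively gives the stated value of $\mathcal D_{d-5}$. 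The standing hypothesis $d\ge8$ enters only to ensure that $\mathcal A_{d-5}$ genuinely has the displayed shape (size $\ge3$) and that all three monomials actually appear.

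I expect the only delicate point to be the first step: getting the signs right in the Toeplitz--Hessenberg recursion, equivalently confirming that it is $c_{-t}(F-E)$, and not $c_t(F-E)$, that sits in the denominator of the generating function. Once that identity is secured, the rest is a mechanical substitution followed by elementary binomial bookkeeping, with the reassuring consistency check that multiplying $\mathcal D_{d-5}$ by $h^5$ and using $\Theta^2=2$ together with $h^{d-1}=0$ recovers $\binom{d-2}{3}-2(d-4)$, the value of $\deg\Sec_3(C)$ obtained in the introduction. A lower-tech alternative would be a direct induction on the size of $\mathcal A_n$ using the same first-column expansion together with the explicit formulas for the $c_i=c_i(F-E)$ and the relations $\Theta^3=0$, $h^{d-1}=0$, but the generating-function route concentrates all the work into one coefficient extraction and is cleaner.
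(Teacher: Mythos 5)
Your proposal is correct, and its main computational step is genuinely different from the paper's. Both arguments begin identically: expanding the Toeplitz--Hessenberg determinant along its first column gives the recursion $\mathcal D_n=\sum_{i=1}^n(-1)^{i-1}c_i\mathcal D_{n-i}$, exactly as in the paper's proof. From there the paper proves a closed formula for every $d_n$, $n\geq 3$ (Lemma \ref{dn}), by induction on $n$, substituting the explicit expressions for $c_i(F-E)$ and grinding through upper negation and Vandermonde convolutions, and then sets $n=d-5$. You instead observe that the recursion is equivalent to the generating-function identity $\sum_{n\geq 0}\mathcal D_n t^n=1/c_{-t}(F-E)$, invert the closed form $c_t(F-E)=(1-ht)^{4-d}\exp\bigl(\tfrac{2\Theta t-\Theta.h\,t^2}{1-ht}\bigr)$ already computed before the Proposition, truncate the exponential using $\Theta^3=0$, and read off the coefficient of $t^{d-5}$. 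I checked the three resulting coefficients: $\binom{d-4}{d-5}=d-4$, $2\binom{d-5}{d-6}+\binom{d-5}{d-7}=\binom{d-3}{2}-1$, and $2\binom{d-6}{d-7}+2\binom{d-6}{d-8}+\tfrac12\binom{d-6}{d-9}=\tfrac12\binom{d-2}{3}-(d-4)$; all are correct, so the stated $\mathcal D_{d-5}$ follows. The trade-off: your route concentrates the work in a single coefficient extraction and avoids the induction and binomial-identity bookkeeping entirely (the only delicate point, the sign $c_{-t}$ in the denominator, you flag and it is right), while the paper's route yields as a by-product the formula for all the intermediate determinants $d_n$ -- information not actually needed for the degree computation. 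Your consistency check (multiplying by $h^5$ and using $\Theta^2=2$ to recover $\binom{d-2}{3}-2(d-4)$) matches the paper's final proposition.
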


\begin{proof}
Let $d$ be fixed. Set $d_0:=1$ and for $n=1,\dots, d-5$, $k=2,\dots d-6$, set 

$$
d_{n}:=\det\left(\begin{array}{ccccc}
c_1&c_2&c_3&\cdots&c_{n}\\
1&c_1&c_2&\cdots&c_{n-1}\\
&&\ddots&\vdots&\vdots\\
0&0&\cdots&c_1&c_2\\
0&0&\cdots&1&c_1\\
\end{array}\right)
$$

\noindent and

$$
b_{n,k}:=\det\left(\begin{array}{cccccc}
c_k&c_{k+1}&c_{k+2}&\cdots&c_{n-1}&c_{n}\\
1&c_1&c_2&\cdots&c_{n-(k+1)}&c_{n-k}\\
0&1&c_1&\cdots&c_{n-(k+2)}&c_{n-(k+1)}\\
&&\ddots&\vdots&\vdots\\
0&0&0&\cdots&c_1&c_2\\
0&0&0&\cdots&1&c_1\\
\end{array}\right).
$$

\vspace{0.3cm}

\noindent By expansion with respect to the first column we have for each $n$ and $k$: 

\vspace{-0.3cm}

\begin{eqnarray*}
d_{n}&=&c_1d_{n-1}-b_{n,2}\\
\end{eqnarray*}

\vspace{-0.4cm}

\noindent and

\vspace{-0.4cm}

\begin{eqnarray*}
b_{n,k}&=&c_kd_{n-k}-b_{n,k+1}.\\
\end{eqnarray*}

\noindent This gives us by induction:

\begin{eqnarray*}
d_{n}&=&\sum_{i=1}^{n}{(-1)^{i-1}c_id_{n-i}}.\\
\end{eqnarray*}

\noindent Computing $d_n$ for low $n$ leads us to the following statement:

\begin{lemma}\label{dn}
For $n\geq 3$ we have
\begin{eqnarray*}
d_n&=&\binom{d-4}{n}h^n+\left(\binom{d-3}{n}-\binom{d-5}{n}\right)\Theta.h^{n-1}\\
&+&\left(\frac{1}{2}\binom{d-2}{n}-\binom{d-4}{n}+\frac{1}{2}\binom{d-6}{n}\right)\Theta^2.h^{n-2}.\\
\end{eqnarray*}
\end{lemma}

\newpage

\begin{proof}By induction over $n$:

\begin{eqnarray*}
d_n&=&\sum_{i=1}^n(-1)^{i-1}c_id_{n-i}\\
&=&\sum_{i=1}^n(-1)^{i-1}\binom{d-5+i}{i}\binom{d-4}{n-i}h^n\\
&+&\sum_{i=1}^n(-1)^{i-1}\Bigg(\binom{d-5+i}{i}\binom{d-3}{n-i}-\binom{d-5+i}{i}\binom{d-5}{n-i}\\
&&\qquad\qquad\quad+\binom{d-4}{n-i}\binom{d-6+i}{i-1}+\binom{d-4}{n-i}\binom{d-5+i}{i-1}\Bigg)\Theta.h^{n-1}\\
&+&\sum_{i=1}^n(-1)^{i-1}\Bigg(\frac{1}{2}\binom{d-5+i}{i}\binom{d-2}{n-i}-\binom{d-5+i}{i}\binom{d-4}{n-i}\\
&&\qquad\qquad\quad+\frac{1}{2}\binom{d-5+i}{i}\binom{d-6}{n-i}+\binom{d-6+i}{i-1}\binom{d-3}{n-i}\\
&&\qquad\qquad\quad-\binom{d-6+i}{i-1}\binom{d-5}{n-i}+\binom{d-5+i}{i-1}\binom{d-3}{n-i}\\
&&\qquad\qquad\quad-\binom{d-5+i}{i-1}\binom{d-5}{n-i}+2\binom{d-6+i}{i-2}\binom{d-4}{n-i}\\
&&\qquad\qquad\quad+\frac{1}{2}\binom{d-7+i}{i-4}\binom{d-4}{n-i}
\Bigg)\Theta^2.h^{n-2}.\\
\end{eqnarray*}

\noindent Using the binomial identities 

\begin{itemize}
\item[(a)] Upper negation: $\binom{-r}{m}=(-1)^m\binom{r+m-1}{m}$ for $r,m\in \mathbf N$,
\item[(b)] Vandermonde's identity: $\sum_{k=0}^{r}\binom{m}{k}\binom{s}{r-k}=\binom{m+s}{r}$ for $m,r,s\in \mathbf N$
\end{itemize}

\noindent we obtain the formula for $d_n$ as given in Lemma \ref{dn}.
\end{proof}

\noindent To finish the proof of Proposition \ref{determinant} we use Lemma \ref{dn} taking $n=d-5\geq 3$:

\vspace{-0.4cm}

\begin{eqnarray*}
\mathcal D_{d-5}&=&d_{d-5}=\binom{d-4}{d-5}h^{d-5}\\
&+&\Bigg(\binom{d-3}{d-5}-\binom{d-5}{d-5}\Bigg)\Theta.h^{d-6}\\
&+&\Bigg(\frac{1}{2}\binom{d-2}{d-5}-\binom{d-4}{d-5}+\frac{1}{2}\binom{d-6}{d-5}\Bigg)\Theta^2.h^{d-7}\\
&=&(d-4)h^{d-5}\\
&+&\Bigg(\binom{d-3}{2}-1\Bigg)\Theta.h^{d-6}\\
&+&\Bigg(\frac{1}{2}\binom{d-2}{3}-(d-4)\Bigg)\Theta^2.h^{d-7}.
\end{eqnarray*}
\end{proof}

\noindent Now we are able to deduce the formula for the degree of $\Sec_3(C)$ where $C$ is a curve of genus $2$ and degree $d\geq 8$ in $\mathbf P^{d-2}$:

\begin{prop}
The degree of the third secant variety $\Sec_3(C)$ of a smooth curve of genus $2$ and degree $d\geq 8$ in $\mathbf P^{d-2}$ is equal to
$$\binom{d-2}{3}-2(d-4).$$
\end{prop}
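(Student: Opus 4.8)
The plan is to substitute the formula for the class $x_1$ obtained in Proposition \ref{determinant} into the identity $\deg(\Sec_3(C))=x_1.h^5$ established at the beginning of Section \ref{degsec}, and then to evaluate the resulting zero-cycle in the Chow ring of $\Pic^3(C)\times \mathbf P^{d-2}$.

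First I would recall that $x_1=\mathcal D_{d-5}$ decomposes into three homogeneous pieces, of the shapes $\Theta^2.h^{d-7}$, $\Theta.h^{d-6}$ and $h^{d-5}$, each of codimension $d-5$. Multiplying through by $h^5$ yields the three candidate top-degree terms $\Theta^2.h^{d-2}$, $\Theta.h^{d-1}$ and $h^d$. Since $h=p_2^{\ast}(h^{\prime})$ is pulled back from $\mathbf P^{d-2}$, and $(h^{\prime})^{d-1}=(h^{\prime})^{d}=0$ in $\mathbf P^{d-2}$, the last two of these terms vanish, so that only the contribution coming from $\Theta^2.h^{d-7}$ survives.

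Next I would evaluate $\Theta^2.h^{d-2}$. The class $\Theta^2$ is supported on the $\Pic^3(C)$ factor and by Proposition \ref{theta2} equals twice the class of a point, while $h^{d-2}=p_2^{\ast}((h^{\prime})^{d-2})$ is the pullback of the class of a point on $\mathbf P^{d-2}$; hence $\Theta^2.h^{d-2}$ is $2$ times the class of a point on $\Pic^3(C)\times\mathbf P^{d-2}$. Combining this with the coefficient $\frac{1}{2}\binom{d-2}{3}-(d-4)$ of $\Theta^2.h^{d-7}$ in $\mathcal D_{d-5}$ gives
$$\deg(\Sec_3(C))=x_1.h^5=2\left(\frac{1}{2}\binom{d-2}{3}-(d-4)\right)=\binom{d-2}{3}-2(d-4),$$
which is the asserted formula; note that it agrees with the value deduced in the introduction via Berzolari's formula, which serves as a consistency check on the whole computation.

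There is essentially no serious obstacle at this stage: once Proposition \ref{determinant} is available, the argument is pure bookkeeping in the Chow ring of a product. The only point demanding care is keeping track of which classes are pulled back from which factor, so that the vanishings $\Theta.h^{d-1}=h^{d}=0$ and the evaluation of $\Theta^2.h^{d-2}$ via Proposition \ref{theta2} are applied correctly, and so that one is confident the two cross terms contribute nothing rather than being silently absorbed.
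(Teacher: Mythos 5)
Your proposal is correct and follows essentially the same route as the paper: substitute $x_1=\mathcal D_{d-5}$ into $\deg(\Sec_3(C))=x_1.h^5$, discard the $\Theta.h^{d-1}$ and $h^{d}$ terms (which the paper drops silently, since $h$ is pulled back from $\mathbf P^{d-2}$), and evaluate the surviving term via $\deg \Theta^2.h^{d-2}=2$ from Proposition \ref{theta2}. Your explicit justification of the vanishing of the two cross terms is a welcome but minor elaboration of the paper's argument.
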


\begin{proof}
Since $\Sec_3(C)$ has dimension $5$, we have to intersect with $(h^{\prime})^5$ where $h^{\prime}$ is a hyperplane class in $\mathbf P^{d-2}$ in order to obtain the degree of $\Sec_3(C)$. From the above remarks we now have to find $\deg x_1.h^5$, where $h=(p_2)^{\ast}(h^{\prime})$.\\
We have 
\begin{eqnarray*}
\deg x_1.h^5&=&\deg \mathcal D_{d-5}.h^5\\
&=&\deg\Bigg(\frac{1}{2}\binom{d-2}{3}-(d-4)\Bigg)\Theta^2.h^{d-2}.\\
\end{eqnarray*}

\noindent Since $\deg\Theta^2.h^{d-2}=2$, (cf. Proposition \ref{theta2}) we finally obtain
$$\deg\mathcal D_{d-5}.h^{5}=2\Bigg(\frac{1}{2}\binom{d-2}{3}-(d-4)\Bigg)=\binom{d-2}{3}-2(d-4).$$
\end{proof}

\subsection*{Acknowledgements}
This paper originated from a part of my Ph.D. thesis. I wish to thank my advisor Kristian Ranestad for continuous helpful advice.

\end{document}